\newtheorem{theorem}{Theorem}[section]
\newtheorem{lemma}[theorem]{Lemma}
\newtheorem{proposition}[theorem]{Proposition}
\newtheorem{corollary}[theorem]{Corollary}
\theoremstyle{definition}
\newtheorem{definition}[theorem]{Definition}
\newtheorem{remark}[theorem]{Remark}
\numberwithin{equation}{section}
\newcounter{smallromans}
\newenvironment{romanenumerate}
{\begin{list}{{\normalfont\textrm{(\roman{smallromans})}}}%
    {\usecounter{smallromans}\setlength{\itemindent}{0cm}%
      \setlength{\leftmargin}{5.5ex}\setlength{\labelwidth}{5.5ex}%
      \setlength{\topsep}{0.2ex}\setlength{\partopsep}{0ex}%
      \setlength{\itemsep}{0.2ex}}}%
  {\end{list}}
\newcommand{\romanref}[1]{{\normalfont\textrm{(\ref{#1})}}}
\newcounter{smallalphs}
\renewcommand{\le}{\ensuremath{\leqslant}}
\newcommand{\N}{\mathbb{N}}
\newcommand{\R}{\mathbb{R}}
\newcommand{\C}{\mathbb{C}}
\newcommand{\K}{\mathbb{K}}
\newcommand{\ann}{\operatorname{ann}} 
\newcommand{\spa}{\operatorname{span}}
\newcommand{\clspa}{\overline{\operatorname{span}}\,}
\renewcommand{\phi}{\ensuremath{\varphi}}
\renewcommand{\epsilon}{\ensuremath{\varepsilon}}
\newcommand{\smashw}[2][l]{{\text{\makebox[0pt][#1]{$#2$}}}}
\begin{document}
\title[A singular, admissible extension which splits algebraically,
 not strongly]{A singular, admissible extension which splits
  algebraically, but not strongly, of the algebra\\ of bounded
  operators on a Banach space}
\dedicatory{In memoriam: Charles J.~Read (1958--2015)}
\subjclass[2010]%
{Primary 46H40, 
46M18, 
47L10; 
Secondary 16S70, 
46B45, 
46H10} 
\author[T.~Kania]{Tomasz Kania}
\thanks{Kania acknowledges with thanks funding from the European
  Research Council/ERC Grant Agreement no.~291497.}
\address{Mathematics Institute, University of Warwick, Gibbet Hill
  Rd., Coventry CV4 7AL, United Kingdom}
\email{tomasz.marcin.kania@gmail.com}
\author[N.~J.~Laustsen]{Niels Jakob Laustsen} 
\address{Department of Mathematics and Statistics, Fylde College,
  Lancaster University, Lancaster LA1 4YF, United Kingdom.}
\email{n.laustsen@lancaster.ac.uk}
\author[R.~Skillicorn]{Richard Skillicorn} 
\address{Department of Mathematics and Statistics, Fylde College,
  Lancaster University, Lancaster LA1 4YF, United Kingdom.}
\email{r.skillicorn@lancaster.ac.uk}
\keywords{Bounded, linear operator; Banach space; Banach algebra;
  short-exact sequence; algebraic splitting; strong splitting;
  singular extension; admissible extension; pullback; separating space}
\begin{abstract}
Let $E$ be the Ba\-nach space constructed by Read (\emph{J.~London
  Math.\ Soc.}~1989) such that the Banach algebra~$\mathscr{B}(E)$ of
bounded operators on~$E$ admits a discontinuous derivation.  We show
that~$\mathscr{B}(E)$ has a singular, admissible extension which
splits algebraically, but does not split strongly.  This answers a
natural question going back to the work of Bade, Dales, and Lykova
(\emph{Mem.\ Amer.\ Math.\ Soc.}~1999), and complements recent results
of Laustsen and Skillicorn (\emph{C.~R.~Math.\ Acad.\ Sci.\ Paris,} to
appear).
\end{abstract}
\maketitle
\section{Introduction and the main result}%
\label{section1}
\noindent 
An \emph{extension} of a Banach algebra~$\mathscr{B}$ is a short-exact
sequence of the form
\begin{equation}\label{shortexactseq} \spreaddiagramcolumns{2ex}%
    \xymatrix{\{0\}\ar[r] & \ker\phi\ar[r] &
      \mathscr{A}\ar^-{\displaystyle{\phi}}[r] & \mathscr{B}\ar[r] &
      \{0\},}
\end{equation} 
where~$\mathscr{A}$ is a Banach algebra
and~$\phi\colon\mathscr{A}\to\mathscr{B}$ is a continuous, surjective
algebra homomorphism. The ex\-ten\-sion is \emph{singular}
if~$\ker\phi$ has the trivial product, that is, $ab=0$ for each
$a,b\in\ker\phi$.  We say that the extension \emph{splits
  algebraically} (respectively, \emph{splits strongly, is admissible})
if~$\phi$ has a right in\-verse which is an algebra homo\-mor\-phism
(respectively, is a continuous algebra homomorphism, is bounded and
linear).  Every extension which splits strongly is evidently
admissible and splits algebraically, and the admissibility
of~\eqref{shortexactseq} is equivalent to $\ker\phi$ being
complemented in~$\mathscr{A}$ as a Banach space.
\smallskip

Motivated by Bade, Dales, and Lykova's comprehensive study~\cite{bdl}
of extensions of Banach algebras, the second- and third-named
authors~\cite{LS1} investigated the interrelationship among the above
properties for extensions of Banach algebras of the
form~$\mathscr{B}(E)$, that is, all bounded operators acting on a
Banach space~$E$, showing that there exist Banach spaces~$E_1$ and
$E_2$ such that:
\begin{romanenumerate}
\item\label{LS1result1} $\mathscr{B}(E_1)$ has a singular extension
  which splits algebraically, but the extension is not admissible, and
  so it does not split strongly;
\item\label{LS1result2} $\mathscr{B}(E_2)$ has a singular extension
  which is admissible, but it does not split algebraically.
\end{romanenumerate}\smallskip

\noindent
These results naturally raise  the following question: 
\begin{center}
\emph{Does there exist a Banach space~$E$ such that~$\mathscr{B}(E)$
  has an admissible\\ extension which splits algebraically, but 
  not strongly?}
\end{center}
The purpose of this note is to answer this question positively. 

\begin{theorem}\label{mainthm}
There exist a Banach space~$E$ and a continuous, surjective algebra
homomorphism~$\phi$ from a unital Banach algebra~$\mathscr{A}$
onto~$\mathscr{B}(E)$ such that the extension
\begin{equation}\label{mainthmEq1} 
  \spreaddiagramcolumns{2ex}\xymatrix{\{0\}\ar[r] & \ker\phi\ar[r] &
    \mathscr{A}\ar^-{\displaystyle{\phi}}[r] & \mathscr{B}(E)\ar[r] &
    \{0\}} \end{equation} is singular and admissible and splits
algebraically, but it does not split strongly.
\end{theorem}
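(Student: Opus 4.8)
The plan is to route the problem through the Hochschild-cohomological description of singular extensions and to use Read's discontinuous derivation as the source of the required pathology. Recall that, for a Banach algebra~$\mathscr{C}$ and a Banach $\mathscr{C}$-bimodule~$\mathscr{X}$, the singular, admissible extensions of~$\mathscr{C}$ whose kernel carries the bimodule structure of~$\mathscr{X}$ are classified up to equivalence by the continuous second Hochschild cohomology group $\mathcal{H}^2(\mathscr{C},\mathscr{X})$: such an extension splits strongly precisely when its class is zero in $\mathcal{H}^2(\mathscr{C},\mathscr{X})$, and it splits algebraically precisely when the corresponding $2$-cocycle becomes a coboundary once the topology is forgotten, i.e.\ when the image of its class under the natural map $\mathcal{H}^2(\mathscr{C},\mathscr{X})\to\mathcal{H}^2_{\mathrm{alg}}(\mathscr{C},\mathscr{X})$ vanishes. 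So for $\mathscr{C}=\mathscr{B}(E)$ the goal is to exhibit a Banach $\mathscr{B}(E)$-bimodule~$\mathscr{X}$ and a continuous $2$-cocycle~$\omega$ that is a coboundary algebraically but not continuously; concretely, a linear map $f\colon\mathscr{B}(E)\to\mathscr{X}$ with $\omega:=\delta^1 f$ continuous and $f$ not expressible as a derivation plus a bounded operator. Then $\mathscr{A}:=\mathscr{B}(E)\oplus\mathscr{X}$ with product $(a,x)(b,y)=(ab,\,a\cdot y+x\cdot b+\omega(a,b))$ and $\phi(a,x)=a$ does the job: it is singular (the kernel $\{0\}\oplus\mathscr{X}$ has zero product), admissible (the kernel is manifestly complemented), splits algebraically via $a\mapsto(a,-f(a))$, and -- by the very failure of $f$ to be a continuous correction of a derivation -- does not split strongly.

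To produce such data from Read's theorem, let $D\colon\mathscr{B}(E)\to\mathscr{X}_0$ be a discontinuous derivation into a Banach $\mathscr{B}(E)$-bimodule~$\mathscr{X}_0$, and let $\mathfrak{S}(D)\subseteq\mathscr{X}_0$ be its separating space, a closed sub-bimodule. I would build $\mathscr{A}$ as (a copy of) the closure of the graph $\{(a,Da):a\in\mathscr{B}(E)\}$ inside the split-null extension $\{0\}\to\mathscr{X}_0\to\mathscr{B}(E)\ltimes\mathscr{X}_0\to\mathscr{B}(E)\to\{0\}$ -- equivalently, a pullback of that extension -- so that the new kernel becomes $\{0\}\times\mathfrak{S}(D)$; passing to the quotient bimodule $\mathscr{X}_0/\mathfrak{S}(D)$ and choosing a bounded linear lifting of it (this is where one needs $\mathfrak{S}(D)$ complemented in~$\mathscr{X}_0$, to be arranged via the choice of Read's derivation) one then manufactures the cocycle~$\omega$ and the map~$f$ of the first paragraph, with $\mathscr{X}:=\mathfrak{S}(D)$. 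That the resulting extension is singular, admissible, and splits algebraically -- the graph of~$D$ furnishing the algebra-homomorphism section -- is then a routine unwinding of the definitions.

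The substantive point, and the step I expect to be the main obstacle, is to show that the extension does not split strongly. A continuous algebra homomorphism $\sigma\colon\mathscr{B}(E)\to\mathscr{A}$ splitting~$\phi$ must have the form $a\mapsto(a,\sigma_2(a))$, where $\sigma_2\colon\mathscr{B}(E)\to\mathscr{X}_0$ is a \emph{continuous} derivation agreeing with~$D$ modulo~$\mathfrak{S}(D)$; hence $D-\sigma_2$ would be a derivation of $\mathscr{B}(E)$ with values in $\mathfrak{S}(D)$, necessarily discontinuous since $D$ is and $\sigma_2$ is not. Ruling this out amounts to proving that $D$ is congruent, modulo~$\mathfrak{S}(D)$, to no continuous derivation of $\mathscr{B}(E)$ into~$\mathscr{X}_0$. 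This is an automatic-continuity assertion that the mere existence of a discontinuous derivation on $\mathscr{B}(E)$ cannot deliver: it has to be extracted from the internal structure of Read's space and his derivation -- for instance, by arranging that $\mathfrak{S}(D)$ is small and rigid enough that every derivation of $\mathscr{B}(E)$ into it is automatically continuous, for then $D-\sigma_2$, being such a derivation, would be continuous, and so would $D$, a contradiction. Carrying out this reduction, and verifying the requisite rigidity of $\mathfrak{S}(D)$ against Read's construction, is where the real work lies.
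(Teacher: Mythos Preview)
Your first paragraph is fine and matches the paper's setup exactly: a singular admissible extension of $\mathscr{B}(E)$ by a bimodule~$\mathscr{X}$ splits algebraically via $a\mapsto(a,-f(a))$ for an intertwining map~$f$, and splits strongly iff $f$ differs from a derivation by a bounded map.

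The second and third paragraphs, however, contain a genuine gap that is not a matter of missing detail but of the construction collapsing. Read's discontinuous derivation~$D$ takes values in a \emph{one-dimensional} bimodule~$\mathscr{X}_0$. Since~$D$ is discontinuous, its separating space is a nonzero closed subspace of~$\mathscr{X}_0$, hence $\mathfrak{S}(D)=\mathscr{X}_0$. The closure of the graph of~$D$ in $\mathscr{B}(E)\ltimes\mathscr{X}_0$ is therefore all of $\mathscr{B}(E)\ltimes\mathscr{X}_0$, and your extension is just the split-null extension itself, which splits strongly via $a\mapsto(a,0)$. Equivalently, in your own formulation: you need to rule out a continuous derivation $\sigma_2$ with $D-\sigma_2$ taking values in~$\mathfrak{S}(D)$, and you propose to do so by showing every derivation into~$\mathfrak{S}(D)$ is continuous; but $D$ itself is a discontinuous derivation into $\mathfrak{S}(D)=\mathscr{X}_0$, so that rigidity fails immediately. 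No amount of ``internal structure of Read's space'' will rescue this particular~$D$; you would have to manufacture a new discontinuous derivation into a bimodule large enough that~$\mathfrak{S}(D)$ is a proper sub-bimodule with the stated automatic-continuity property, and you give no indication of how to do that.

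The paper takes a different route that avoids this trap. Rather than working with Read's derivation, it uses the structural fact (Theorem~1.2) that $\mathscr{B}(E_{\text{R}})/\mathscr{W}(E_{\text{R}})\cong\ell_2^{\sim}$ with a strong splitting, and reduces via a pullback argument to building the desired extension over the far simpler algebra~$\ell_2$ with trivial product. On~$\ell_2$, derivations into a bimodule~$X$ with trivial right action are exactly the linear maps landing in~$\ann_{\ell_2}X$ (Lemma~2.4), so the problem becomes purely Banach-space-theoretic: find a linear map $S\colon\ell_2\to X$ and a closed subspace $Y=\mathfrak{S}(S)$ such that $(S+T)[\ell_2]\not\subseteq Y$ for every bounded~$T$. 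The paper does this explicitly with $X=\ell_1(\Xi)$, $\Xi$ a Hamel basis of~$\ell_2$, and $S\xi=e_\xi$; the key Lemma~2.6 proves the non-inclusion using compactness of bounded maps $\ell_2\to\ell_1$. This is the ``real work'' you anticipated, but it is carried out on~$\ell_2$, not on~$\mathscr{B}(E_{\text{R}})$, and involves constructing a new intertwining map into an infinite-dimensional target rather than massaging Read's one-dimensional derivation.
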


In fact, the Banach spaces $E_1$ and $E_2$ used to establish
statements~\romanref{LS1result1} and~\romanref{LS1result2} above are
equal (but the extensions are obviously distinct), both being the
Banach space~$E_{\text{R}}$ con\-structed by Read~\cite{read} with the
property that there is a discontinuous derivation
from~$\mathscr{B}(E_{\text{R}})$ into a one-dimensional
$\mathscr{B}(E_{\text{R}})$-bimodule. This Banach space will also play
the role of~$E$ in Theorem~\ref{mainthm}.

It is no coincidence that the Banach algebra~$\mathscr{B}(E)$
associated with the Banach space~$E$ in Theorem~\ref{mainthm} admits a
discontinuous derivation.  Indeed, a Banach algebra~$\mathscr{B}$
which has an extension that splits algebraically, but not strongly,
obviously admits a discontinuous algebra homomorphism. This property
is generally weaker than admitting a discontinuous derivation.
However, if in addition we require that the extension be singular and
admissible, then~$\mathscr{B}$ must admit a discontinuous derivation.

To see this, suppose contrapositively that every derivation
from~$\mathscr{B}$ into a Banach $\mathscr{B}$\nobreakdash-bi\-module
is continuous. Then, by a theorem of Dales and Villena (see
\cite[Corollary~2.2]{DV}, or \cite[Corollary 2.7.7]{dales} for an
exposition), every intertwining map (as defined on p.~\pageref{p4}
below) from~$\mathscr{B}$ into a Banach $\mathscr{B}$-bi\-module is
continuous, and therefore, using \cite[Theorem~2.13]{bdl} or
\cite[Theorem~2.8.16]{dales}, we conclude that every singular,
admissible extension of~$\mathscr{B}$ which splits algebraically also
splits strongly.  This result implies in particular that every singular,
admissible extension of the compact
operators~$\mathscr{K}(E_{\text{R}})$ which splits algebraically also
splits strongly, in contrast to Theorem~\ref{mainthm}, because Read
showed that~$E_{\text{R}}$ has a Schauder
basis~\cite[Lemma~5.3]{read}, and hence every derivation
from~$\mathscr{K}(E_{\text{R}})$ into a
$\mathscr{K}(E_{\text{R}})$-bimodule is continuous (see
\cite[Theorem~5.2]{read} or \cite[Corollary~5.3.9]{dales}).\smallskip

The usefulness of Read's Banach space~$E_{\text{R}}$ in the context of
extensions originates from the following theorem, which strengthens
and clarifies the main technical results underlying Read's
construction of a discontinuous derivation
from~$\mathscr{B}(E_{\text{R}})$ (see \cite[p.~306 and
  Sec\-tion~4]{read}). In it, we denote by $\ell_2^{\sim}$ the
unitization of the Hilbert space~$\ell_2$ endowed with the trivial
product, so that $\ell_2^{\sim} = \ell_2\oplus\K 1$ as a
vector space (where~$\K$ denotes the scalar field, either~$\R$
or~$\C$, and~$1$ is the formal identity that we adjoin), and the
product and norm on~$\ell_2^{\sim}$ are given by
\[ (\xi + s1)(\eta + t1) =
s\eta + t\xi+ st1\quad \text{and}\quad \|\xi + s1\| = \|\xi\| +
|s|\qquad (\xi,\eta\in\ell_2,\, s,t\in\mathbb{K}). \]

\begin{theorem}[{\cite[Theorem~1.2]{LS2}}]\label{WEBEsplitexact}
There exists a continuous, surjective algebra
homomorphism~$\psi\colon\mathscr{B}(E_{\normalfont{\text{R}}})\to\ell_2^{\sim}$
with $\ker\psi = \mathscr{W}(E_{\normalfont{\text{R}}})$ (the ideal of
weakly compact operators on~$E_{\normalfont{\text{R}}}$) such that the
extension
\begin{equation}\label{WEBEsplitexactEq1} 
 \spreaddiagramcolumns{2ex}\xymatrix{\{0\}\ar[r] &
   \mathscr{W}(E_{\normalfont{\text{R}}})\ar[r] &
   \mathscr{B}(E_{\normalfont{\text{R}}})\ar^-{\displaystyle{\psi}}[r]
   & \ell_2^{\sim}\ar[r] & \{0\}}
\end{equation} 
splits strongly.
\end{theorem}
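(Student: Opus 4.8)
The plan is to extract from Read's construction of $E_{\text{R}}$ in~\cite{read} just enough structural information about $\mathscr{B}(E_{\text{R}})$ to assemble $\psi$ and its splitting by formal arguments. Concretely, I would isolate two facts, which are essentially the main technical results underlying Read's work alluded to above: \textup{(a)} there is a sequence $(T_n)_{n\geq1}$ in $\mathscr{B}(E_{\text{R}})$ with $T_mT_n=0$ for all $m,n\in\N$ and with $\bigl\|\sum_{n}\xi_nT_n\bigr\|\leq C\|\xi\|_2$ for some constant $C>0$ and every $\xi=(\xi_n)\in\ell_2$ (in Read's construction each $T_n$ maps $E_{\text{R}}$ into one fixed closed subspace on which every $T_n$ vanishes, which accounts for the first property and in particular makes each $T_n$ square-zero); and \textup{(b)} writing $I$ for the identity operator on~$E_{\text{R}}$, the linear map
\[ \K\times\ell_2\times\mathscr{W}(E_{\text{R}})\longrightarrow\mathscr{B}(E_{\text{R}}),\qquad (s,\xi,K)\longmapsto sI+\textstyle\sum_{n}\xi_nT_n+K, \]
is a homeomorphism onto $\mathscr{B}(E_{\text{R}})$ --- equivalently, modulo weakly compact operators every bounded operator on~$E_{\text{R}}$ is a unique scalar multiple of~$I$ plus a unique $\ell_2$-combination of the~$T_n$, with continuous dependence on the operator.

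Granting \textup{(a)} and \textup{(b)}, set $\mathscr{D}':=\{\sum_{n}\xi_nT_n:\xi\in\ell_2\}$ and $\mathscr{D}:=\K I+\mathscr{D}'$. It follows from \textup{(a)} and \textup{(b)} that $\xi\mapsto\sum_{n}\xi_nT_n$ is a linear homeomorphism of $\ell_2$ onto the closed subspace~$\mathscr{D}'$, which moreover satisfies $ST=0$ for all $S,T\in\mathscr{D}'$; since $I\notin\mathscr{D}'$ (each $S\in\mathscr{D}'$ has $S^2=0$ whereas $I^2=I\neq0$), the sum $\mathscr{D}=\K I+\mathscr{D}'$ is direct, so $\mathscr{D}$ is a closed unital subalgebra of~$\mathscr{B}(E_{\text{R}})$ and $sI+\sum_{n}\xi_nT_n\mapsto\xi+s1$ is a Banach-algebra isomorphism of~$\mathscr{D}$ onto~$\ell_2^{\sim}$ (the two products agree because $\mathscr{D}'$ has trivial product). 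Now define $\psi\colon\mathscr{B}(E_{\text{R}})\to\ell_2^{\sim}$ by $\psi\bigl(sI+\sum_{n}\xi_nT_n+K\bigr):=\xi+s1$ for $s\in\K$, $\xi\in\ell_2$ and $K\in\mathscr{W}(E_{\text{R}})$; by \textup{(b)} this is a well-defined, bounded, surjective linear map with $\ker\psi=\mathscr{W}(E_{\text{R}})$, and its restriction to~$\mathscr{D}$ is the isomorphism just described.

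To see that $\psi$ is multiplicative, take $S,T\in\mathscr{B}(E_{\text{R}})$ and write $S=sI+\sum_{n}\xi_nT_n+K$ and $T=tI+\sum_{n}\eta_nT_n+L$ with $K,L\in\mathscr{W}(E_{\text{R}})$. Expanding $ST$ and using that $\mathscr{W}(E_{\text{R}})$ is a closed two-sided ideal and that $\bigl(\sum_{n}\xi_nT_n\bigr)\bigl(\sum_{n}\eta_nT_n\bigr)=0$ by~\textup{(a)}, one obtains $ST\equiv stI+\sum_{n}(s\eta_n+t\xi_n)T_n\pmod{\mathscr{W}(E_{\text{R}})}$, and hence $\psi(ST)=st1+s\eta+t\xi=(\xi+s1)(\eta+t1)=\psi(S)\psi(T)$ by the multiplication rule of~$\ell_2^{\sim}$. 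Thus $\psi$ is a continuous, surjective algebra homomorphism with $\ker\psi=\mathscr{W}(E_{\text{R}})$. Finally, $\psi|_{\mathscr{D}}\colon\mathscr{D}\to\ell_2^{\sim}$ is a continuous bijective algebra homomorphism between Banach algebras, hence a topological isomorphism, so $\rho:=(\psi|_{\mathscr{D}})^{-1}\colon\ell_2^{\sim}\to\mathscr{B}(E_{\text{R}})$ is a continuous algebra homomorphism with $\psi\circ\rho=\mathrm{id}_{\ell_2^{\sim}}$; that is, $\psi$ admits a continuous right inverse which is an algebra homomorphism, so the extension~\eqref{WEBEsplitexactEq1} splits strongly.

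The serious work lies entirely in establishing \textup{(a)} and \textup{(b)} from Read's construction, and within this the crux is~\textup{(b)}: a rigidity statement to the effect that $E_{\text{R}}$ supports so few operators that $\mathscr{B}(E_{\text{R}})/\mathscr{W}(E_{\text{R}})$ collapses onto the commutative algebra~$\ell_2^{\sim}$. Proving it means following Read's intricate description of~$E_{\text{R}}$ and showing that, after one subtracts from an arbitrary bounded operator the scalar and the $\ell_2$-combination of the~$T_n$ dictated by its action on the vectors and functionals built into~$E_{\text{R}}$, the remainder is weakly compact, with uniform control of the norms. Everything downstream --- the bookkeeping for multiplicativity of~$\psi$, the identification of~$\mathscr{D}$ with~$\ell_2^{\sim}$, and the passage to the splitting~$\rho$ --- is routine once \textup{(a)} and \textup{(b)} are available. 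The one point needing care is that Read's original analysis produces a character on~$\mathscr{B}(E_{\text{R}})$ and a derivation-type map more or less separately, so one must check that they genuinely combine into a single homomorphism~$\psi$ with kernel exactly~$\mathscr{W}(E_{\text{R}})$.
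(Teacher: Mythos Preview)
The paper does not prove this theorem: it is quoted as \cite[Theorem~1.2]{LS2} and used as a black box in the proof of Theorem~\ref{mainthm}, so there is no proof in the present paper against which to compare your proposal.

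That said, your outline is structurally sound. Granting the two inputs you isolate --- \textup{(a)} the existence of a sequence $(T_n)$ spanning a closed copy of~$\ell_2$ with trivial product inside~$\mathscr{B}(E_{\text{R}})$, and \textup{(b)} the topological direct-sum decomposition $\mathscr{B}(E_{\text{R}}) = \K I \oplus \mathscr{D}' \oplus \mathscr{W}(E_{\text{R}})$ --- the verification that $\psi$ is a well-defined continuous surjective algebra homomorphism with kernel $\mathscr{W}(E_{\text{R}})$, and that $\rho = (\psi|_{\mathscr{D}})^{-1}$ is a continuous multiplicative splitting, goes through exactly as you write it. You also correctly locate the genuine content in establishing \textup{(a)} and especially \textup{(b)} from the fine structure of Read's space; this is indeed what~\cite{LS2} carries out, and the paper itself describes that result as a strengthening and clarification of Read's original technical work. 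Your closing caveat --- that Read's paper packages the information somewhat differently (a character plus a derivation-type map rather than a single homomorphism onto~$\ell_2^{\sim}$), so that some reorganization is needed --- is apt and is precisely the gap that~\cite{LS2} fills.
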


\section{Proof of Theorem~\ref{mainthm}} 
\noindent Throughout, all Banach spaces and algebras are considered
over the same scalar field~$\K$, where either $\K = \R$ or
$\K=\C$.

\begin{definition} Let $X$ and $Y$ be Banach spaces,
and let $S\colon X\to Y$ be a linear map. The \emph{separating space}
of~$S$ is given by
\[ \mathfrak{S}(S) = \{ y\in Y: \text{there is a null
  sequence}\ (x_n)_{n\in\N}\ \text{in}\ X\ \text{such that}\ Sx_n\to
y\ \text{as}\ n\to\infty\}. \]
\end{definition}

We shall require the following fundamental facts about the separating
space; see, \emph{e.g.,} \cite[Propositions~5.1.2 and
  5.2.2(ii)]{dales} or \cite[Lemmas 1.2(i)--(ii)
  and~1.3(ii)]{sinclair}. The second clause is of course just a
restatement of the Closed Graph Theorem. It explains why the
separating space plays an important role in automatic continuity
theory.

\begin{proposition}\label{sepspacebasics}  Let $X$ and $Y$ be Banach spaces,
and let $S\colon X\to Y$ be a linear map. Then:
\begin{romanenumerate}
\item\label{sepspacebasics1} $\mathfrak{S}(S)$ is a closed subspace of~$Y;$
\item\label{sepspacebasics2} $S$ is bounded if and only if
  $\mathfrak{S}(S)=\{0\}$.
\item\label{sepspacebasics3} Let $Z$ be a Banach space, and suppose
  that $T\colon Y\to Z$ is bounded and linear. Then \[
  \mathfrak{S}(TS) = \overline{T[\mathfrak{S}(S)]}. \]
\end{romanenumerate}
\end{proposition}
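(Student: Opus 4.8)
The plan is to prove the three clauses in order, as each is a standard application of sequential arguments combined with the Closed Graph Theorem. For \romanref{sepspacebasics1}, I would first check that $\mathfrak{S}(S)$ is a linear subspace: if $y_1,y_2\in\mathfrak{S}(S)$ are witnessed by null sequences $(x_n^{(1)})$ and $(x_n^{(2)})$ with $Sx_n^{(i)}\to y_i$, then $(x_n^{(1)}+x_n^{(2)})$ is null and $S(x_n^{(1)}+x_n^{(2)})\to y_1+y_2$, and scalar multiples are handled the same way. For closedness, suppose $y_k\to y$ with each $y_k\in\mathfrak{S}(S)$; I would pass to a subsequence so that $\|y_k-y\|<2^{-k}$, choose for each $k$ a single index $n_k$ (with $n_k$ strictly increasing) such that $\|x_{n_k}^{(k)}\|<2^{-k}$ and $\|Sx_{n_k}^{(k)}-y_k\|<2^{-k}$ from the null sequence witnessing $y_k$, and then the diagonal sequence $z_k:=x_{n_k}^{(k)}$ is null while $Sz_k\to y$, so $y\in\mathfrak{S}(S)$.

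Clause \romanref{sepspacebasics2} is, as the preamble says, a reformulation of the Closed Graph Theorem. If $S$ is bounded then $Sx_n\to 0$ whenever $x_n\to 0$, forcing $\mathfrak{S}(S)=\{0\}$. Conversely, if $\mathfrak{S}(S)=\{0\}$, I would verify that the graph of $S$ is closed: given $x_n\to x$ in $X$ with $Sx_n\to y$ in $Y$, the sequence $(x_n-x)$ is null and $S(x_n-x)=Sx_n-Sx\to y-Sx$, so $y-Sx\in\mathfrak{S}(S)=\{0\}$, i.e.\ $y=Sx$; the Closed Graph Theorem then yields boundedness of $S$.

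For clause \romanref{sepspacebasics3}, both inclusions are straightforward. If $y\in\mathfrak{S}(S)$, witnessed by a null sequence $(x_n)$ with $Sx_n\to y$, then continuity of $T$ gives $T(Sx_n)\to Ty$, so $Ty\in\mathfrak{S}(TS)$; since $\mathfrak{S}(TS)$ is closed by \romanref{sepspacebasics1}, we get $\overline{T[\mathfrak{S}(S)]}\subseteq\mathfrak{S}(TS)$. Conversely, if $z\in\mathfrak{S}(TS)$, witnessed by a null sequence $(x_n)$ with $TSx_n\to z$, then after passing to a subsequence along which $(Sx_n)$ either converges or (by a further diagonal extraction as in clause \romanref{sepspacebasics1}) we may realize a limit point $y\in\mathfrak{S}(S)$, continuity of $T$ forces $z=Ty\in T[\mathfrak{S}(S)]$; care is needed here because $(Sx_n)$ need not itself converge, so the cleanest route is to argue that every subsequential limit of $(TSx_n)$ lies in $T[\mathfrak{S}(S)]$ and invoke closedness of $\mathfrak{S}(TS)$. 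The one mild subtlety worth flagging is exactly this last point: the separating space of $S$ captures \emph{all} limit points of $(Sx_n)$ over null $(x_n)$, not the limit of a single sequence, so the reverse inclusion in \romanref{sepspacebasics3} must be phrased via subsequences rather than by naively applying $T$ to a convergent sequence. None of this is deep, but it is the only place where one cannot simply push a sequence through the map.
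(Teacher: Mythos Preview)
The paper does not supply a proof of this proposition; it is quoted as a standard fact with references to Dales and to Sinclair. So there is no paper argument to compare against, and your attempt must stand on its own.

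Clauses~(i) and~(ii), and the inclusion $\overline{T[\mathfrak{S}(S)]}\subseteq\mathfrak{S}(TS)$ in~(iii), are handled correctly. The gap is in the reverse inclusion $\mathfrak{S}(TS)\subseteq\overline{T[\mathfrak{S}(S)]}$. Given a null sequence $(x_n)$ with $TSx_n\to z$, you propose to pass to a subsequence along which $(Sx_n)$ converges, or to ``realize a limit point $y\in\mathfrak{S}(S)$'' by a diagonal extraction. Neither is available: $S$ is not bounded, so $(Sx_n)$ may well be unbounded, and even if it were bounded there is no compactness in an infinite-dimensional Banach space to furnish a convergent subsequence. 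Your fallback---that every subsequential limit of $(TSx_n)$ lies in $T[\mathfrak{S}(S)]$---is vacuous: $(TSx_n)$ already converges to $z$, and locating $z$ in $\overline{T[\mathfrak{S}(S)]}$ is precisely the point at issue.

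The standard argument (Sinclair, Lemma~1.3) goes through an open-mapping/lifting step that your sketch omits. One first shows that the composite $\pi S\colon X\to Y/\mathfrak{S}(S)$ is bounded, where $\pi$ is the quotient map: if $x_n\to 0$ and $\pi Sx_n\to \pi v$, the quotient norm lets one choose $u_n\in\mathfrak{S}(S)$ with $Sx_n-u_n\to v$, and then a diagonal choice (exactly as in your proof of~(i)) produces a null sequence witnessing $v\in\mathfrak{S}(S)$, so $\pi v=0$. Once $\pi S$ is known to be bounded, the reverse inclusion follows formally: with $Q\colon Z\to Z/\overline{T[\mathfrak{S}(S)]}$ the quotient map, $QT$ vanishes on $\mathfrak{S}(S)$ and hence factors through $\pi$ by a bounded map, so $QTS$ is bounded and $Qz=\lim QTSx_n=0$. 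This lifting step, not a subsequence extraction on $(Sx_n)$, is the missing idea.
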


\subsection*{The pullback of Banach algebras} As in \cite{LS1}, 
the connection between the extension~\eqref{WEBEsplitexactEq1}
of~$\ell_2^{\sim}$ and the extension~\eqref{mainthmEq1}
of~$\mathscr{B}(E_{\normalfont{\text{R}}})$ that we aim to establish
goes via the pullback construction, which we shall now review. Let
$\alpha\colon\mathscr{A}\to\mathscr{C}$ and
$\beta\colon\mathscr{B}\to\mathscr{C}$ be continuous algebra
homomorphisms between Banach algebras~$\mathscr{A}$, $\mathscr{B}$,
and~$\mathscr{C}$, and set
\begin{equation}\label{expldefnpullback} 
\mathscr{D} = \{(a,b)\in\mathscr{A}\oplus\mathscr{B} : \alpha(a) =
\beta(b) \}, \end{equation} where~$\mathscr{A}\oplus\mathscr{B}$
denotes the Banach-algebra direct sum of~$\mathscr{A}$
and~$\mathscr{B}$.  For later reference, we note that~$\mathscr{D}$
is unital in the case where~$\mathscr{A}$, $\mathscr{B}$,
$\mathscr{C}$, $\alpha$, and~$\beta$ are unital. Let~$\gamma$
and~$\delta$ be the restrictions to~$\mathscr{D}$ of the coordinate
projections:
\begin{equation}\label{defngammadelta}
 \gamma\colon\ (a,b)\mapsto a,\quad
 \mathscr{D}\to\mathscr{A},\qquad\text{and}\qquad
 \delta\colon\ (a,b)\mapsto b,\quad
 \mathscr{D}\to\mathscr{B}. \end{equation} They are continuous
algebra homomorphisms, and the triple $(\mathscr{D},\gamma,\delta)$
has the uni\-ver\-sal property of pullbacks, as observed in
\cite[Section~2]{LS1}, where the following connection with extensions
is also established.

\begin{proposition}[{\cite[Proposition~2.2]{LS1}}]%
\label{pullbacksandsplittingsofextensions}
Let~$\mathscr{A}$, $\mathscr{B}$, and~$\mathscr{C}$ be Banach algebras
such that there are extensions
\begin{equation}\label{ext1} \spreaddiagramcolumns{2ex}%
    \xymatrix{\{0\}\ar[r] & \ker\alpha\ar[r] &
      \mathscr{A}\ar^-{\displaystyle{\alpha}}[r] & \mathscr{C}\ar[r] &
      \{0\}}
\end{equation} 
and 
\begin{equation}\label{ext2} \spreaddiagramcolumns{2ex}%
    \xymatrix{\{0\}\ar[r] & \ker\beta\ar[r] &
      \mathscr{B}\ar^-{\displaystyle{\beta}}[r] & \mathscr{C}\ar[r] &
      \{0\}\smashw{,}}
\end{equation}
and define $\mathscr{D}$, $\gamma$, and $\delta$
by~\eqref{expldefnpullback} and~\eqref{defngammadelta}
above. Then~$\delta$ is surjective, and the following statements
concerning the extension
\begin{equation}\label{ext3} \spreaddiagramcolumns{2ex}%
    \xymatrix{\{0\}\ar[r] & \ker\delta\ar[r] &
      \mathscr{D}\ar^-{\displaystyle{\delta}}[r] & \mathscr{B}\ar[r] &
      \{0\}}
\end{equation}
 hold true:
\begin{romanenumerate}
\item\label{pullbacksandsplittingsofextensions1} \eqref{ext3} is
  singular if and only if~\eqref{ext1} is singular.
\item\label{pullbacksandsplittingsofextensions2} Suppose
  that~\eqref{ext2} splits strongly (respectively, splits
  algebraically, is admissible).  Then \eqref{ext3} splits strongly
  (respectively, splits algebraically, is admissible) if and only
  if~\eqref{ext1} splits strongly (respectively, splits algebraically,
  is admissible).
\end{romanenumerate}
\end{proposition}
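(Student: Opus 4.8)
The plan is a diagram chase built on the universal property of the pullback $(\mathscr{D},\gamma,\delta)$, carried out explicitly through the coordinate homomorphisms. I would first dispose of the preliminary assertions. Surjectivity of~$\delta$ is immediate: given $b\in\mathscr{B}$, surjectivity of~$\alpha$ in~\eqref{ext1} provides $a\in\mathscr{A}$ with $\alpha(a)=\beta(b)$, whence $(a,b)\in\mathscr{D}$ and $\delta(a,b)=b$. Moreover, if $(a,b)\in\ker\delta$ then $b=0$, and so $\alpha(a)=\beta(b)=0$; thus $\ker\delta=(\ker\alpha)\times\{0\}$, and $\gamma$ restricts to an isometric algebra isomorphism of~$\ker\delta$ onto~$\ker\alpha$. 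Part~\romanref{pullbacksandsplittingsofextensions1} then follows at once, since the product on~$\ker\delta$ is coordinatewise, $(a,0)(a',0)=(aa',0)$, so $\ker\delta$ has trivial product exactly when $\ker\alpha$ does.

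For part~\romanref{pullbacksandsplittingsofextensions2}, fix a right inverse $\rho\colon\mathscr{C}\to\mathscr{B}$ of~$\beta$ which is --- according to which of the three splitting notions is in play --- a continuous algebra homomorphism, an algebra homomorphism, or a bounded linear map; call this the \emph{type} under consideration. Assume first that~\eqref{ext1} splits in the corresponding sense, via a right inverse $\sigma\colon\mathscr{C}\to\mathscr{A}$ of~$\alpha$ of that type, and define $\theta\colon\mathscr{B}\to\mathscr{D}$ by $\theta(b)=(\sigma(\beta(b)),b)$. This is well defined into~$\mathscr{D}$ because $\alpha(\sigma(\beta(b)))=\beta(b)$, it satisfies $\delta\circ\theta=\mathrm{id}_{\mathscr{B}}$, and it has the chosen type because its first coordinate $\sigma\circ\beta$ does (being a composite of~$\sigma$ with the continuous algebra homomorphism~$\beta$) while its second coordinate is the identity on~$\mathscr{B}$. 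Hence~\eqref{ext3} splits in the corresponding sense; note that this implication does not use~$\rho$.

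Conversely, assume~\eqref{ext3} splits in the relevant sense, via a right inverse $\theta\colon\mathscr{B}\to\mathscr{D}$ of~$\delta$ of the chosen type, and set $\sigma=\gamma\circ\theta\circ\rho\colon\mathscr{C}\to\mathscr{A}$. For $c\in\mathscr{C}$, writing $\theta(\rho(c))=(a,b)$ we obtain $\sigma(c)=a$ and $\alpha(a)=\beta(b)=\beta\bigl(\delta(\theta(\rho(c)))\bigr)=\beta(\rho(c))=c$, so $\alpha\circ\sigma=\mathrm{id}_{\mathscr{C}}$; and $\sigma$ has the chosen type since $\gamma$ is a continuous algebra homomorphism and $\theta$, $\rho$ have that type. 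Thus~\eqref{ext1} splits in the corresponding sense, which completes part~\romanref{pullbacksandsplittingsofextensions2}. I expect no genuine obstacle here; the only point demanding attention is the uniform bookkeeping --- checking in all three cases simultaneously that the composites $\theta$ and~$\sigma$ retain precisely the regularity being transferred, and observing that the standing hypothesis on~\eqref{ext2} is invoked only for the converse implication.
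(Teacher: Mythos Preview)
Your proof is correct and is the natural diagram chase for this pullback statement. Note, however, that the paper does not actually prove Proposition~\ref{pullbacksandsplittingsofextensions}: it is quoted from \cite[Proposition~2.2]{LS1} without proof, so there is no in-paper argument to compare against. Your approach is exactly what one would expect that cited proof to contain --- identifying $\ker\delta$ with $\ker\alpha\times\{0\}$ for part~\romanref{pullbacksandsplittingsofextensions1}, and for part~\romanref{pullbacksandsplittingsofextensions2} building the splittings $\theta(b)=(\sigma(\beta(b)),b)$ and $\sigma=\gamma\circ\theta\circ\rho$ in the two directions, checking that the relevant regularity (continuous homomorphism, homomorphism, or bounded linear) is preserved under composition.
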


\subsection*{Derivations and intertwining maps}  
Throughout this paragraph, $\mathscr{A}$ denotes a Banach algebra, and
$X$ is a Banach $\mathscr{A}$-bimodule.  The \emph{annihilator}
of~$\mathscr{A}$ in~$X$ is given by
\[ \ann_{\mathscr{A}}X = \{ x\in X: a\cdot x = 0 = x\cdot
a\ \text{for each}\ a\in\mathscr{A}\}. \] Let $S\colon\mathscr{A}\to
X$ be a linear map, and consider the bilinear map
\begin{equation}\label{coboundDefn}
\delta^1 S\colon\ (a,b)\mapsto a\cdot (Sb) - S(ab) + (Sa)\cdot b,\quad
\mathscr{A}\times\mathscr{A}\to X. \end{equation} We say that~$S$ is a
\emph{derivation} if $\delta^1S = 0$, while~$S$ is an
\emph{intertwining map}\label{p4} if $\delta^1S$ is continuous.

Using~$\delta^1S$, we can define an algebra product on the
vector space $\mathscr{A}\oplus X$ as follows:
\begin{equation}\label{AoplusXmult} (a,x)(b,y) = (ab, a\cdot y +
  x\cdot b + (\delta^1S)(a,b))\qquad (a,b\in\mathscr{A},\, x,y\in
  X). \end{equation} Suppose that~$S$ is an intertwining map.  Then
the above product is continuous with respect to the norm
$\|(a,x)\| = \|a\|+\|x\|$ for $a\in\mathscr{A}$ and $x\in X$, and so
$\mathscr{A}\oplus X$ is a Banach algebra with respect to an
equivalent norm. The map
\[ \pi_{\mathscr{A}}\colon (a,x)\mapsto a,\quad\mathscr{A}\oplus
X\to\mathscr{A}, \] 
is clearly a continuous, surjective algebra homomorphism with kernel
$\{0\}\oplus X$, which has the trivial product, so that we
obtain a singular extension of~$\mathscr{A}$: 
\begin{equation}\label{extAoplusX} 
\spreaddiagramcolumns{2ex}\xymatrix{\{0\}\ar[r] & \{0\}\oplus X\ar[r]
  & \mathscr{A}\oplus X\ar^-{\displaystyle{\pi_{\mathscr{A}}}}[r] &
  \mathscr{A}\ar[r] & \{0\}.} \end{equation} Moreover, this extension
is admissible because $a\mapsto
(a,0),\ \mathscr{A}\to\mathscr{A}\oplus X,$ is a bounded, linear right
inverse of~$\pi_{\mathscr{A}}$, and it splits algebraically because
$a\mapsto (a,-Sa),\ \mathscr{A}\to\mathscr{A}\oplus X,$ is an algebra
homomorphism which is also a right inverse of~$\pi_{\mathscr{A}}$. A
direct calculation shows that~\eqref{extAoplusX} splits strongly if
and only if there is a derivation $D\colon \mathscr{A}\to X$ such that
$S-D$ is continuous. This is a well-known generalization of results in
pure algebra, going back to Johnson \cite[Theorem~2.1 and
  Corollary~2.2]{johnsonWed}; see also \cite[Theorem~2.8.12]{dales}
and the text preceding it.\smallskip

As we have already indicated, our strategy is to construct an
extension with suitably chosen properties of the unitization
of~$\ell_2$ endowed with the trivial product, and then apply
Proposition~\ref{pullbacksandsplittingsofextensions} together with
Theorem~\ref{WEBEsplitexact} to obtain the desired extension
of~$\mathscr{B}(E_{\text{R}})$. Adjoining an identity to an extension
is straightforward, in the sense that it does not alter any of the
properties that we are interested in. Consequently, we shall
henceforth focus our attention on the case where the Banach
algebra~$\mathscr{A}$ has the trivial product. It turns out that we
may also simplify matters by supposing that the bimodule~$X$ has the
trivial right action; that is, \mbox{$ab = 0$} and \mbox{$x\cdot a
  =0$} for each $a,b\in\mathscr{A}$ and $x\in X$. In this case,
derivations and intertwining maps have nice characterizations in terms
of the annihilator and the separating space.

\begin{lemma}\label{lemmaIntDeriv}
Let $\mathscr{A}$ be a Banach algebra with the trivial product, let
$X$ be a Banach $\mathscr{A}$-bimodule with the trivial right action,
and let $S\colon\mathscr{A}\to X$ be a linear map. Then:
\begin{romanenumerate}
\item\label{lemmaIntDeriv1} $S$ is a derivation if and only if
  $S[\mathscr{A}]\subseteq\ann_{\mathscr{A}}X;$
\item\label{lemmaIntDeriv2} $S$ is an intertwining map if and only if
  $\mathfrak{S}(S)\subseteq \ann_{\mathscr{A}}X.$
\end{romanenumerate}
\end{lemma}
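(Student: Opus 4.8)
The plan is to unwind the definitions of derivation and intertwining map under the two simplifying hypotheses — that $\mathscr{A}$ has the trivial product (so $ab = 0$ always) and that $X$ has the trivial right action (so $x\cdot a = 0$ always) — and see that in each case the condition $\delta^1 S = 0$ (respectively, $\delta^1 S$ continuous) collapses to a statement purely about the left action $a\mapsto a\cdot(Sb)$.

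For part~\romanref{lemmaIntDeriv1}: with $ab = 0$ and $x\cdot a = 0$, the formula \eqref{coboundDefn} simplifies to $(\delta^1 S)(a,b) = a\cdot(Sb)$ for all $a,b\in\mathscr{A}$. Hence $S$ is a derivation, i.e.\ $\delta^1 S = 0$, exactly when $a\cdot(Sb) = 0$ for all $a,b\in\mathscr{A}$. Since the right action is already trivial, $(Sb)\cdot a = 0$ automatically, so this says precisely that $Sb\in\ann_{\mathscr{A}}X$ for every $b$, that is, $S[\mathscr{A}]\subseteq\ann_{\mathscr{A}}X$. This direction is a one-line computation once the reduction is in place.

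For part~\romanref{lemmaIntDeriv2}: again $(\delta^1 S)(a,b) = a\cdot(Sb)$, so I want to show that this bilinear map is continuous if and only if $\mathfrak{S}(S)\subseteq\ann_{\mathscr{A}}X$. For the ``if'' direction, suppose $\mathfrak{S}(S)\subseteq\ann_{\mathscr{A}}X$; fix $a\in\mathscr{A}$ and consider the linear map $T_a\colon\mathscr{A}\to X$, $b\mapsto a\cdot(Sb)$. This is the composition of $S$ with the bounded operator $x\mapsto a\cdot x$ on $X$, so by Proposition~\ref{sepspacebasics}\romanref{sepspacebasics3} its separating space is the closure of the image under $x\mapsto a\cdot x$ of $\mathfrak{S}(S)$, which is $\{0\}$ by hypothesis; hence $T_a$ is bounded by Proposition~\ref{sepspacebasics}\romanref{sepspacebasics2}, for each fixed $a$. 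Thus $\delta^1 S$ is separately continuous; since $\mathscr{A}$ and $X$ are Banach spaces, separate continuity of a bilinear map forces joint continuity (an application of the uniform boundedness principle, or equivalently of the Closed Graph Theorem to $a\mapsto T_a$), so $S$ is an intertwining map. Conversely, if $S$ is an intertwining map, then for each fixed $a$ the map $T_a = (x\mapsto a\cdot x)\circ S$ is bounded, so by the same two parts of Proposition~\ref{sepspacebasics} we get $\overline{\{a\cdot x : x\in\mathfrak{S}(S)\}} = \mathfrak{S}(T_a) = \{0\}$; since the right action is trivial this gives $\mathfrak{S}(S)\subseteq\ann_{\mathscr{A}}X$.

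The only point requiring genuine care — the \emph{main obstacle} — is the passage from separate to joint continuity of $\delta^1 S$ in the ``if'' direction of~\romanref{lemmaIntDeriv2}: one must either invoke the standard Banach-space fact that a separately continuous bilinear map on a product of Banach spaces is automatically jointly continuous, or argue directly that $a\mapsto T_a$ has closed graph as a map $\mathscr{A}\to\mathscr{B}(\mathscr{A},X)$ and is therefore bounded, whence $\|(\delta^1 S)(a,b)\| = \|T_a b\|\leq\|T_a\|\,\|b\|\leq C\|a\|\,\|b\|$. Everything else is a routine expansion of \eqref{coboundDefn} under the trivial-product and trivial-right-action hypotheses combined with Proposition~\ref{sepspacebasics}\romanref{sepspacebasics2}--\romanref{sepspacebasics3}.
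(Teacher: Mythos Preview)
Your proof is correct and follows essentially the same route as the paper: reduce $(\delta^1 S)(a,b)$ to $a\cdot(Sb)$, read off~(i) immediately, and for~(ii) use Proposition~\ref{sepspacebasics}\romanref{sepspacebasics2}--\romanref{sepspacebasics3} applied to $L_a\circ S$ together with the separate-implies-joint continuity fact for bilinear maps. The only cosmetic differences are that the paper handles the forward implication of~(ii) by a direct null-sequence argument rather than invoking Proposition~\ref{sepspacebasics} again, and it explicitly records that continuity in the first variable comes from the module action --- a point you use implicitly when asserting that $\delta^1 S$ is separately continuous.
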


\begin{proof} We begin by noting that, by the hypotheses,
  \eqref{coboundDefn} reduces to
\begin{equation}\label{lemmaIntDerivEq1} (\delta^1S)(a,b) = a\cdot(Sb)\qquad
  (a,b\in\mathscr{A}),
\end{equation}
from which~\romanref{lemmaIntDeriv1} follows immediately.

\romanref{lemmaIntDeriv2}. Suppose that~$S$ is an intertwining map,
and let $y\in\mathfrak{S}(S)$ and $a\in\mathscr{A}$ be given. Since
$y\cdot a = 0$ by hypothesis, it remains to show that $a\cdot y=0$.
Take a null sequence $(x_n)_{n\in\N}$ in~$\mathscr{A}$ such that
$Sx_n\to y$ as $n\to\infty$. On the one hand, the continuity of the
left action of~$\mathscr{A}$ on~$X$ implies that $a\cdot Sx_n\to
a\cdot y$ as $n\to\infty$, while on the other the continuity of the
map~$\delta^1S$ given by~\eqref{lemmaIntDerivEq1} shows that $a\cdot
Sx_n = (\delta^1S)(a,x_n)\to(\delta^1S)(a,0) = 0$ as $n\to\infty$, so
that $a\cdot y =0$.

Conversely, suppose that $\mathfrak{S}(S)\subseteq
\ann_{\mathscr{A}}X$. A bilinear map is jointly continuous if (and
only if) it is separately continuous, and~\eqref{lemmaIntDerivEq1}
shows that~$\delta^1S$ is continuous in the first variable by the
continuity of the module map.  Now fix $a\in\mathscr{A}$, and consider
the left action $L_a\colon x\mapsto a\cdot x,\, X\to X,$ which is
bounded and linear.  The assumption means that $L_a[\mathfrak{S}(S)] =
\{0\}$, so the composite map $L_aS$ is bounded by
Proposition~\ref{sepspacebasics}\romanref{sepspacebasics2}%
--\romanref{sepspacebasics3}.  Hence~$\delta^1S$ is continuous in the
second variable because $(\delta^1S)(a,b) = (L_aS)b$ for each
$b\in\mathscr{A}$.
\end{proof}

For a Banach space~$Y$, we denote by~$\ell_\infty(\N,Y)$ the Banach
space of all bounded sequences in~$Y$.

\begin{lemma}\label{lemmaXAmodwithannY}
Let~$\mathscr{A}$ be an in\-finite-di\-men\-sional Banach algebra with
the trivial product, and let~$Y$ be a closed subspace of a Banach
space~$X$. Endow~$X$ with the trivial right action
of~$\mathscr{A}$, and  suppose that there exists a bounded, linear
injection from the quotient space~$X/Y$ into~$\ell_\infty(\N,Y)$. Then
there exists a Banach left module action of~$\mathscr{A}$ on~$X$ with
$\ann_{\mathscr{A}} X = Y$.  The converse is true in the case
where~$\mathscr{A}$ is separable.
\end{lemma}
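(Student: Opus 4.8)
The plan is to treat the two directions separately; the forward implication will be established by an explicit construction, and the converse by a short, duality-free argument exploiting separability. For the forward direction, write the assumed bounded, linear injection as $\theta\colon X/Y\to\ell_\infty(\N,Y)$, and for $n\in\N$ let $\theta_n\colon X\to Y$ be $\theta$ followed by the $n$th coordinate projection. Each $\theta_n$ is bounded and linear, vanishes on $Y$, satisfies $\|\theta_n(x)\|\leqslant\|\theta\|\,\|x\|$ for every $x\in X$, and $\bigcap_{n\in\N}\ker\theta_n=Y$ because $\theta$ is injective. Since $\mathscr A$ is infinite-dimensional it carries a biorthogonal sequence: there are $(a_n)_{n\in\N}$ in $\mathscr A$ and $(\lambda_n)_{n\in\N}$ in $\mathscr A^*$ with $\lambda_m(a_n)=\delta_{m,n}$, and after replacing $a_n$ by $c_na_n$ and $\lambda_n$ by $c_n^{-1}\lambda_n$ for suitable scalars $c_n>0$ we may assume that $\sum_{n\in\N}\|\lambda_n\|<\infty$.

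Using these ingredients I would define
\[ a\cdot x=\sum_{n=1}^{\infty}\lambda_n(a)\,\theta_n(x)\qquad(a\in\mathscr A,\ x\in X). \]
The series converges absolutely in $X$, since $\|\lambda_n(a)\theta_n(x)\|\leqslant\|\theta\|\,\|\lambda_n\|\,\|a\|\,\|x\|$; the bilinear map $(a,x)\mapsto a\cdot x$ is bounded; and $a_n\cdot x=\theta_n(x)$ for every $n\in\N$ and $x\in X$ by biorthogonality. The crucial observation is that this is a left module action: as $\mathscr A$ has the trivial product, $(ab)\cdot x=0$, whereas $b\cdot x$ is a limit of elements of the closed subspace $Y$, hence lies in $Y$, so that $\theta_n(b\cdot x)=0$ for every $n$ and therefore $a\cdot(b\cdot x)=0$ as well. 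Together with the trivial right action this makes $X$ a Banach $\mathscr A$-bimodule. Finally $\ann_{\mathscr A}X=Y$: the inclusion $Y\subseteq\ann_{\mathscr A}X$ holds because each $\theta_n$ annihilates $Y$, while if $x\in\ann_{\mathscr A}X$ then $\theta_n(x)=a_n\cdot x=0$ for every $n$, so $x\in\bigcap_{n\in\N}\ker\theta_n=Y$.

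For the converse, suppose $\mathscr A$ is separable and that $X$ carries a Banach left module action with $\ann_{\mathscr A}X=Y$ (and, as throughout, the trivial right action). From $a\cdot(b\cdot x)=(ab)\cdot x=0$ we see that $b\cdot x\in\ann_{\mathscr A}X=Y$, so for each $a\in\mathscr A$ the map $x\mapsto a\cdot x$ takes $X$ into $Y$ and annihilates $Y$, thus factoring through a bounded operator $T_a\colon X/Y\to Y$ with $\|T_a\|\leqslant M\|a\|$, where $M$ bounds the module action. Choosing a sequence $(a_n)_{n\in\N}$ dense in the closed unit ball of $\mathscr A$, the assignment $x+Y\mapsto(a_n\cdot x)_{n\in\N}$ defines a bounded, linear map $X/Y\to\ell_\infty(\N,Y)$, and it is injective: if $a_n\cdot x=0$ for every $n$, then $a\cdot x=0$ for every $a\in\mathscr A$ by density and continuity of $a\mapsto a\cdot x$, whence $x\in\ann_{\mathscr A}X=Y$.

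I expect the only step that is not an entirely routine verification to be the existence of the biorthogonal sequence in $\mathscr A$. I would supply this by the familiar inductive construction: having chosen $a_1,\dots,a_{n-1}$ and $\lambda_1,\dots,\lambda_{n-1}$ with $\lambda_i(a_j)=\delta_{i,j}$, pick a unit vector $a_n$ in the subspace $\bigcap_{i<n}\ker\lambda_i$, which has finite codimension and so is infinite-dimensional; then $a_1,\dots,a_n$ are linearly independent, and the Hahn--Banach theorem extends the functional on $\spa\{a_1,\dots,a_n\}$ that sends $a_n$ to $1$ and $a_1,\dots,a_{n-1}$ to $0$ to a functional $\lambda_n\in\mathscr A^*$; because subsequent vectors are chosen in $\bigcap_{i<j}\ker\lambda_i$, one automatically gets $\lambda_n(a_j)=0$ for $j>n$, so the full system is biorthogonal. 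Alternatively, the existence of biorthogonal systems in infinite-dimensional Banach spaces may simply be quoted from the literature.
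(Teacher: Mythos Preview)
Your proof is correct and follows essentially the same route as the paper's. Both directions match: for the forward implication the paper likewise composes the given injection with the quotient map and the coordinate projections to obtain maps $L_n\colon X\to Y$ (your $\theta_n$), picks a countable biorthogonal system in $\mathscr{A}\times\mathscr{A}^*$, and defines the left action as an absolutely convergent weighted sum of the $L_n$; for the converse the paper also chooses a countable set whose span is dense and builds the injection $x+Y\mapsto(a_n\cdot x)_{n\in\N}$. The only cosmetic difference is in the normalization: the paper leaves the biorthogonal system untouched and inserts the damping factor $2^{-n}\|f_n\|^{-1}(\|L_n\|+1)^{-1}$ directly into the series, whereas you first rescale so that $\sum_n\|\lambda_n\|<\infty$ and exploit the uniform bound $\|\theta_n\|\le\|\theta\|$---both achieve the same absolute convergence.
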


\begin{proof}
Let $T\colon X/Y\to\ell_\infty(\N,Y)$ be a bounded, linear injection,
and set \[ L_n = \pi_nTQ\colon\ X\to Y\qquad (n\in\N), \] where
$Q\colon X\to X/Y$ is the quotient map and
$\pi_n\colon\ell_\infty(\N,Y)\to Y$ is the $n^{\text{th}}$ coordinate
projection. 
Since $\mathscr{A}$ is infinite-dimensional, we may choose a countable
biorthogonal system $(a_n,f_n)_{n\in\N}$ in
$\mathscr{A}\times\mathscr{A}^*$; that is, $a_n\in\mathscr{A}$ and
$f_n$ is a bounded, linear functional on~$\mathscr{A}$ such that
$\langle a_m, f_n\rangle = \delta_{m,n}$ (the Kronecker delta) for
each $m,n\in\N$. Then the series
\[ \sum_{n=1}^\infty\frac{\langle a,f_n\rangle}{2^n\|f_n\| (\|L_n\|+1)}
L_nx \] converges absolutely in~$Y$ for each $a\in\mathscr{A}$ and
$x\in X$, and its sum, which we shall denote by $a\cdot x$, has norm
at most $\|a\|\,\|x\|$. It is easy to see that $(a,x)\mapsto a\cdot
x,\, \mathscr{A}\times X\to Y,$ is bilinear and that $a\cdot y =0$ for
each $y\in Y$, so that $a\cdot (b\cdot x) = 0 = (ab)\cdot x$ for each
$a,b\in\mathscr{A}$ and $x\in X$. Hence we have defined a Banach left
$\mathscr{A}$-module action on~$X$, and $Y\subseteq\ann_{\mathscr{A}}
X$. To prove the reverse inclusion, suppose that
$x\in\ann_{\mathscr{A}} X$. Then we have
\[ 0 = a_m\cdot x = \frac{1}{2^m\|f_m\|(\|L_m\|+1)}L_m x =
\frac{1}{2^m\|f_m\|(\|L_m\|+1)}\pi_mTQx \qquad (m\in\N), \] so that $TQx =0$,
and therefore $x\in\ker Q= Y$ because $T$ is injective.

To prove the converse statement in the case where $\mathscr{A}$ is
separable, suppose that $X$ has a Banach left $\mathscr{A}$-module
action with $\ann_{\mathscr{A}} X = Y$, and choose a sequence
$(a_n)_{n\in\N}$ of unit vectors in~$\mathscr{A}$ with $\clspa \{a_n :
n\in\N\} = \mathscr{A}$.  For each $a\in\mathscr{A}$ and $x\in X$, we
have $a\cdot x\in\ann_{\mathscr{A}} X = Y$ because $\mathscr{A}$ has
the trivial product, so
\[ T\colon x\mapsto  (a_n\cdot x)_{n\in\N},\quad
X\to\ell_\infty(\N,Y),\] defines a bounded, linear map with 
\[ \ker T = \{ x\in X: a_n\cdot x =0\ \text{for each}\ n\in\N\} =
\ann_{\mathscr{A}}X = Y \] because $\clspa \{a_n : n\in\N\} =
\mathscr{A}$. Hence the conclusion follows from the Fundamental
Isomorphism Theorem.
\end{proof}

The following corollary summarizes what we have found so far, and what
remains to be done to achieve our goal.

\begin{corollary}\label{takingstock}
Let~$\mathscr{A}$ be an infinite-dimensional Banach algebra with the
trivial product, let~$Y$ be a closed subspace of a Banach space~$X$
such that there exists a bounded, linear injection from the quotient
space~$X/Y$ into~$\ell_\infty(\N,Y),$ and suppose that there exists a
linear map $S\colon\mathscr{A}\to X$ with $\mathfrak{S}(S)\subseteq
Y$.  Then~$\mathscr{A}$ has a singular, admissible extension which
splits algebraically. Further, this extension splits strongly if and
only if
\begin{equation}\label{takingstockEq1} (S+T)[\mathscr{A}]\subseteq 
Y \end{equation} for some bounded, linear map $T\colon\mathscr{A}\to
X$.
\end{corollary}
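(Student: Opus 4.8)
The plan is to stitch together the ingredients already at hand. First I would feed the hypotheses into Lemma~\ref{lemmaXAmodwithannY}: since $\mathscr{A}$ is infinite-dimensional, $Y$ is a closed subspace of $X$, and there is a bounded, linear injection from $X/Y$ into $\ell_\infty(\N,Y)$, that lemma produces a Banach left $\mathscr{A}$-module action on $X$ with $\ann_{\mathscr{A}}X = Y$. Endowing $X$ also with the trivial right action of $\mathscr{A}$ makes it a Banach $\mathscr{A}$-bimodule with trivial right action (the bimodule axioms hold because $\mathscr{A}$ has the trivial product and $a\cdot x\in Y$ for all $a,x$, whence $a\cdot(b\cdot x)=0=(ab)\cdot x$). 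Hence Lemma~\ref{lemmaIntDeriv} applies to $S\colon\mathscr{A}\to X$, and since by hypothesis $\mathfrak{S}(S)\subseteq Y=\ann_{\mathscr{A}}X$, part~\romanref{lemmaIntDeriv2} of that lemma tells us that $S$ is an intertwining map.

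Next I would run the construction from the paragraph ``Derivations and intertwining maps'' with these $\mathscr{A}$, $X$, and $S$: the twisted product~\eqref{AoplusXmult} turns $\mathscr{A}\oplus X$ into a Banach algebra (after passing to an equivalent norm), and the coordinate projection $\pi_{\mathscr{A}}$ yields the singular extension~\eqref{extAoplusX} of $\mathscr{A}$, which is admissible because $a\mapsto(a,0)$ is a bounded, linear right inverse of $\pi_{\mathscr{A}}$, and splits algebraically because $a\mapsto(a,-Sa)$ is a homomorphism right inverse of $\pi_{\mathscr{A}}$. This is exactly the first assertion of the corollary.

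For the equivalence I would invoke the criterion recorded in that same paragraph (going back to Johnson): the extension~\eqref{extAoplusX} splits strongly if and only if there is a derivation $D\colon\mathscr{A}\to X$ such that $S-D$ is continuous. Combining this with Lemma~\ref{lemmaIntDeriv}\romanref{lemmaIntDeriv1}, according to which $D$ is a derivation precisely when $D[\mathscr{A}]\subseteq\ann_{\mathscr{A}}X=Y$, and substituting $T=D-S$ (equivalently $D=S+T$), the criterion becomes: there is a bounded, linear map $T\colon\mathscr{A}\to X$ with $(S+T)[\mathscr{A}]\subseteq Y$, i.e.~\eqref{takingstockEq1}. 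Both directions are then one line: if~\eqref{takingstockEq1} holds, then $D:=S+T$ maps into $Y=\ann_{\mathscr{A}}X$, hence is a derivation, and $S-D=-T$ is continuous; conversely, if $D$ is a derivation with $S-D$ continuous, then $T:=D-S$ is bounded and linear and $(S+T)[\mathscr{A}]=D[\mathscr{A}]\subseteq Y$.

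There is no genuinely hard step here: the corollary is a bookkeeping statement that repackages Lemmas~\ref{lemmaIntDeriv} and~\ref{lemmaXAmodwithannY} together with the standard semidirect-product construction. The only point that deserves a little care is the strong-splitting criterion, which is merely asserted (``a direct calculation shows\dots'') in the text above; in a self-contained treatment I would either cite \cite[Theorem~2.1 and Corollary~2.2]{johnsonWed} and \cite[Theorem~2.8.12]{dales}, or carry out the short computation showing that every continuous homomorphism right inverse of $\pi_{\mathscr{A}}$ has the form $a\mapsto(a,-Da)$ for a derivation $D$ with $D-S$ continuous.
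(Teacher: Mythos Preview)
Your proposal is correct and follows essentially the same approach as the paper: apply Lemma~\ref{lemmaXAmodwithannY} to obtain the bimodule structure with $\ann_{\mathscr{A}}X=Y$, use Lemma~\ref{lemmaIntDeriv}\romanref{lemmaIntDeriv2} to see that $S$ is intertwining, form the extension~\eqref{extAoplusX}, and then translate the strong-splitting criterion via Lemma~\ref{lemmaIntDeriv}\romanref{lemmaIntDeriv1} with the substitution $T=D-S$. Your write-up is simply a bit more explicit than the paper's in verifying the bimodule axioms and spelling out both directions of the final equivalence.
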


\begin{proof} By Lemma~\ref{lemmaXAmodwithannY}, we can
  endow~$X$ with a Banach $\mathscr{A}$-bimodule structure such that
  the right action is trivial and $\ann_{\mathscr{A}}X = Y$, and
  Lemma~\ref{lemmaIntDeriv}\romanref{lemmaIntDeriv2} then implies
  that~$S$ is an intertwining map.  Hence $\mathscr{A}\oplus X$ is a
  Banach algebra with respect to the
  product~\eqref{AoplusXmult}, and~\eqref{extAoplusX} is a
  singular, admissible extension of~$\mathscr{A}$ which splits
  algebraically. As we stated, this extension splits strongly if and
  only if $S-D$ is continuous for some deri\-va\-tion
  \mbox{$D\colon\mathscr{A}\to X$.} By
  Lemma~\ref{lemmaIntDeriv}\romanref{lemmaIntDeriv1}, the latter
  condition is equivalent to~\eqref{takingstockEq1} (take $T = D-S$).
\end{proof}

We now come to our main technical lemma, which will ensure that we can
apply Corollary~\ref{takingstock} whenever the Banach
algebra~$\mathscr{A}$ satisfies the additional hypothesis that every
bounded, linear map from~$\mathscr{A}$ into~$\ell_1$ is compact.  The
statement of this lemma involves the following notation and
terminology.

First, the \emph{density character} of a Banach space~$X$ is the smallest
cardinality of a dense subset of~$X$. 

Second, for a non-empty set $\Xi$, we denote by~$\ell_1(\Xi)$ the
Banach space of all absolutely summable, scalar-valued functions
defined on~$\Xi$.  For $\xi\in\Xi$, we write $e_\xi$ for the function
which takes the value~$1$ at~$\xi$ and~$0$ elsewhere, and we write
$e_\xi'$ for the corresponding coordinate functional, so that $e_\xi'$
is given by $\langle f, e_\xi'\rangle = f(\xi)$ for each
$f\in\ell_1(\Xi)$. As usual, we write~$\ell_1$ for~$\ell_1(\N)$.

\begin{lemma}\label{keysepspacelemma}
Let $Z$ be an infinite-dimensional Banach space such that every
bounded, linear map from~$Z$ into~$\ell_1$ is compact, let $\Xi$ be a
normalized Hamel basis for~$Z,$ and consider the linear map $S\colon
Z\to\ell_1(\Xi)$ given by $S\xi = e_\xi$ for each $\xi\in\Xi$. Then
\begin{equation}\label{keysepspacelemmaEq1}
 (S+T)[Z]\not\subseteq\mathfrak{S}(S)
\end{equation} 
for each bounded, linear map $T\colon Z\to\ell_1(\Xi),$ and the
density character of the quotient space $\ell_1(\Xi)/\mathfrak{S}(S)$
is no greater than the density character of~$Z$.
\end{lemma}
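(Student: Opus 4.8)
The plan is to analyse the linear map $S\colon Z\to\ell_1(\Xi)$, $S\xi=e_\xi$, by first identifying $\mathfrak{S}(S)$ concretely enough to control both the quotient $\ell_1(\Xi)/\mathfrak{S}(S)$ and the possible images $(S+T)[Z]$. Observe that $S$ is well defined precisely because $\Xi$ is a Hamel basis: every $z\in Z$ is a \emph{finite} linear combination $z=\sum_{\xi}\lambda_\xi\,\xi$, so $Sz=\sum_\xi\lambda_\xi e_\xi\in\ell_1(\Xi)$ has finite support. Thus $S[Z]=c_{00}(\Xi)$, the space of finitely supported functions, which is dense in $\ell_1(\Xi)$. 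Since $\mathfrak{S}(S)\supseteq\overline{S[\ker(\text{identity})\cdots}}$... more usefully, $\mathfrak{S}(S)$ is a closed subspace of $\ell_1(\Xi)$ by Proposition~\ref{sepspacebasics}\romanref{sepspacebasics1}, and it is certainly nonzero (indeed not all of $\ell_1(\Xi)$ — that is exactly what \eqref{keysepspacelemmaEq1} will encode). The first substantive task is the density-character bound: because $Z$ is infinite-dimensional, $\Xi$ is infinite, and one checks that the density character of $\ell_1(\Xi)$ equals $|\Xi|$, while the density character of $Z$ is likewise infinite; so it suffices to show $|\Xi|\le\operatorname{dens}Z$. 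But a normalized Hamel basis of an infinite-dimensional Banach space always has cardinality equal to $\operatorname{dens}Z$ in fact $|\Xi|=\dim Z\ge\operatorname{dens}Z$ is the wrong direction — the correct elementary fact is that a \emph{separable} infinite-dimensional Banach space has Hamel dimension $\mathfrak{c}$, so the bound must instead be extracted from the compactness hypothesis. I would therefore tie $\mathfrak{S}(S)$ to the structure of maps into $\ell_1$: if $q\colon\ell_1(\Xi)\to\ell_1(\Xi)/\mathfrak{S}(S)$ is the quotient map, then $qS$ is bounded by Proposition~\ref{sepspacebasics}\romanref{sepspacebasics2}--\romanref{sepspacebasics3}, and I want to show that a dense set in the target of size $\le\operatorname{dens}Z$ arises from $qS[Z]=q[c_{00}(\Xi)]$, which has density character at most $\operatorname{dens}(\overline{S[Z]})=\operatorname{dens}\ell_1(\Xi)$; so the real content is that passing to the quotient by $\mathfrak{S}(S)$ collapses things down to size $\operatorname{dens}Z$ — and this is where the hypothesis ``every bounded linear map $Z\to\ell_1$ is compact'' does the work.

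The technical heart is the non-containment \eqref{keysepspacelemmaEq1}. Suppose for contradiction that $(S+T)[Z]\subseteq\mathfrak{S}(S)$ for some bounded linear $T\colon Z\to\ell_1(\Xi)$; I want to manufacture from this a bounded, \emph{non-compact} linear map from $Z$ into $\ell_1$, contradicting the hypothesis. The idea: $\mathfrak{S}(S)$ should be ``thin'' in a direction transverse to the coordinate functionals $e_\xi'$. Concretely, I expect that for each $\xi\in\Xi$ the functional $e_\xi'$ restricted to $\mathfrak{S}(S)$ is zero, or more precisely that $\mathfrak{S}(S)\subseteq\ker e_\xi'$ fails but the \emph{diagonal} behaviour $\langle Sz,e_\xi'\rangle=\lambda_\xi$ (the $\xi$-coordinate of $z$ in the Hamel basis) cannot be matched inside $\mathfrak{S}(S)$ up to a bounded perturbation. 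The cleanest route: compose with the coordinate functional. For a fixed $\xi_0\in\Xi$, the map $z\mapsto\langle Sz,e_{\xi_0}'\rangle$ is the (generally unbounded) linear functional extracting the $\xi_0$-coordinate; if $(S+T)[Z]\subseteq\mathfrak{S}(S)$, then for a null sequence $(z_n)$ witnessing a given $y\in\mathfrak{S}(S)$ one gets control forcing $\langle y,e_{\xi_0}'\rangle$ to vanish for all $\xi_0$ (since $\langle Sz_n,e_{\xi_0}'\rangle$ is eventually the fixed finite number that is $0$ once $n$ is large because $z_n\to0$ in norm and $z_n$'s Hamel support need not stabilize — this needs care). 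Granting $\mathfrak{S}(S)\subseteq\bigcap_{\xi}\ker e_\xi'=\{0\}$ would be too strong; rather I anticipate $\mathfrak{S}(S)=\mathfrak{S}(S)$ is the closure of something like $\overline{T'[Z]}$ for a comparison map, and the assumption $(S+T)[Z]\subseteq\mathfrak{S}(S)$ says $S=-T+(\text{map into }\mathfrak{S}(S))$, i.e.\ $S$ is bounded modulo $\mathfrak{S}(S)$; projecting onto a copy of $\ell_1$ sitting complementably inside $\ell_1(\Xi)$ along countably many coordinates then yields a bounded map $Z\to\ell_1$ whose non-compactness follows because $S$ sends the normalized Hamel-basis vectors $\xi$ to the unit vectors $e_\xi$, which form a non-relatively-compact set.

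To make that last step precise I would fix a countably infinite subset $\{\xi_k:k\in\N\}\subseteq\Xi$, let $P\colon\ell_1(\Xi)\to\ell_1$ be the norm-one projection $f\mapsto(f(\xi_k))_{k\in\N}$, and consider $PS$ and $PT$. From $(S+T)[Z]\subseteq\mathfrak{S}(S)$ and $P[\mathfrak{S}(S)]$ being small (ideally $P[\mathfrak{S}(S)]=\{0\}$, which I would verify by showing each $e_{\xi_k}'$ annihilates $\mathfrak{S}(S)$: if $Sz_n\to y$ with $\|z_n\|\to0$, then writing $z_n$ in the Hamel basis, the $\xi_k$-coordinate $\langle Sz_n,e_{\xi_k}'\rangle$ need not tend to $0$ a priori, so instead I use that $e_{\xi_k}'\circ S$ is unbounded yet $e_{\xi_k}'\circ S=e_{\xi_k}'\circ S$ — the resolution is that boundedness of $P S$ modulo nothing is false, so in fact $P$ must kill $\mathfrak{S}(S)$ for the quotient statement to be consistent), one deduces $PS=-PT$ on $Z$, so $PS\colon Z\to\ell_1$ is bounded; but $PS(\xi_k)=e_k$ for each $k$, and $\{e_k\}$ has no norm-convergent subsequence in $\ell_1$ while $\{\xi_k\}$ is bounded, so $PS$ is not compact — contradicting the hypothesis on $Z$. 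This also feeds back into the density-character claim, since once we know the coordinate functionals along any countable set annihilate $\mathfrak{S}(S)$, a counting/separability argument on the complementary coordinates gives $\operatorname{dens}\bigl(\ell_1(\Xi)/\mathfrak{S}(S)\bigr)\le\operatorname{dens}Z$. The main obstacle, I expect, is the delicate interplay in identifying $P[\mathfrak{S}(S)]$: a null sequence in $Z$ has, at each stage, a finite Hamel expansion, but the \emph{sizes} of those finite supports and the coordinates can blow up, so one must argue carefully (likely via the compactness hypothesis applied to auxiliary maps, or via a gliding-hump selection) that no element of $\mathfrak{S}(S)$ can have a nonzero coordinate along a basis vector — or, failing that, that whatever coordinates it does have are governed by a compact operator and hence do not obstruct the argument.
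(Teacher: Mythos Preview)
Your density-character argument is essentially correct but mis-attributed: once you know that $QS$ is bounded (by Proposition~\ref{sepspacebasics}\romanref{sepspacebasics2}--\romanref{sepspacebasics3}) and that $S[Z]=c_{00}(\Xi)$ is dense in~$\ell_1(\Xi)$, it follows immediately that $QS[Z]$ is dense in the quotient, so the density character of the quotient is at most that of~$Z$. The compactness hypothesis plays no role here; the paper proves this clause in exactly this way.

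The main argument, however, has a fatal gap. Your plan hinges on showing that the coordinate functionals $e_{\xi}'$ annihilate~$\mathfrak{S}(S)$ (at least along a chosen countable set~$\{\xi_k\}$), so that $P[\mathfrak{S}(S)]=\{0\}$ and hence $PS=-PT$ is bounded. But nothing in your argument distinguishes the~$\xi_k$ from any other element of~$\Xi$; if the claim held, it would hold for every~$\xi$, forcing $\mathfrak{S}(S)\subseteq\bigcap_{\xi\in\Xi}\ker e_\xi'=\{0\}$ and hence~$S$ bounded. Your own endgame then shows this is impossible: $PS\colon Z\to\ell_1$ would be a bounded map sending the unit vectors~$\xi_k$ to the unit vectors~$e_k$, which is non-compact, contradicting the hypothesis on~$Z$. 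So the claim $e_\xi'|_{\mathfrak{S}(S)}=0$ is \emph{false}, and with it the deduction $PS=-PT$. The passages where you write ``this needs care'' and ``the resolution is that boundedness of $PS$ modulo nothing is false, so in fact $P$ must kill $\mathfrak{S}(S)$'' are not arguments; they are precisely where the proof breaks.

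The paper avoids any direct description of~$\mathfrak{S}(S)$. Instead it first shows that~$T$ itself is compact (any bounded sequence in~$Z$ has $T$-image supported on a countable~$\Gamma\subseteq\Xi$, so one may project to~$\ell_1(\Gamma)\cong\ell_1$ and invoke the hypothesis). Compactness gives separable range, so $T[Z]\subseteq\clspa\{e_\xi:\xi\in\Xi_0\}$ for some countable~$\Xi_0$, and strict singularity yields a unit vector $z\in\spa(\Xi\setminus\Xi_0)$ with $\|Tz\|\le\tfrac12$. Writing a null sequence witnessing $(S+T)z\in\mathfrak{S}(S)$ and splitting it along the finite Hamel support~$\Upsilon$ of~$z$, one obtains vectors $y_n\in\spa(\Xi\setminus\Upsilon)$ with $y_n\to -z$ and $Sy_n\to Tz$. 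The decisive inequality is then the elementary $\|y_n\|\le\|Sy_n\|$ (triangle inequality, using $\|\xi\|=1$), which forces $1=\|z\|\le\|Tz\|\le\tfrac12$, a contradiction. The compactness hypothesis is used to control~$T$, not~$\mathfrak{S}(S)$.
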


\begin{proof} No infinite-dimensional Banach space has a countable
  Hamel basis, so the set~$\Xi$ is necessarily uncountable.  Assume
  towards a contradiction that $(S+T)[Z]\subseteq\mathfrak{S}(S)$ for
  some bounded, linear map $T\colon Z\to\ell_1(\Xi)$. 

To verify that~$T$ is compact, consider a bounded sequence
$(z_n)_{n\in\N}$ in~$Z$. Then the set
\[ \Gamma = \bigcup_{n\in\N}\{\xi\in\Xi : \langle Tz_n, e_\xi'\rangle\ne 0\} \]
is countable. Let $P\colon\ell_1(\Xi)\to\ell_1(\Gamma)$ be the
canonical projection.  By the hypothesis, $PT$ is compact, so
$(z_n)_{n\in\N}$ has a subsequence $(z_{n_j})_{j\in\N}$ such that
$(PTz_{n_j})_{j\in\N}$ is convergent. Since
\[ \| Tz_m - Tz_n \| = \| PTz_m - PTz_n\|\qquad (m,n\in\N) \] by the
choice of~$\Gamma$, we conclude that $(Tz_{n_j})_{j\in\N}$ is also
convergent, and hence~$T$ is compact.

In particular, $T$ has separable range and is strictly singular (see,
\emph{e.g.,} \cite[Propositions~1.11.7 and~1.11.9]{pietsch}), so
$T[Z]\subseteq\clspa\{e_\xi : \xi\in\Xi_0\}$ for some countable
subset~$\Xi_0$ of~$\Xi$, and the infinite-dimensional subspace
$\spa(\Xi\setminus\Xi_0)$ of~$Z$ contains a unit vector~$z$ such that
\mbox{$\|Tz\|\le1/2$}. Take a finite subset~$\Upsilon$
of~$\Xi\setminus\Xi_0$ such that $z = \sum_{\xi\in\Upsilon}s_\xi\xi$
for some scalars $s_\xi\ (\xi\in\Upsilon)$. By the assumption,
$(S+T)z\in\mathfrak{S}(S)$, so we can find a null sequence
$(x_n)_{n\in\N}$ in~$Z$ such that $Sx_n\to (S+T)z$ as
$n\to\infty$. Write $x_n$ as $x_n = \sum_{\xi\in\Upsilon} s_{n,\xi}\xi
+ y_n$, where $s_{n,\xi}\in\K\ (\xi\in\Upsilon)$ and
$y_n\in\spa(\Xi\setminus\Upsilon)$. For each $\xi\in\Upsilon$, we
have \[ Sy_n\in\spa\{e_\eta : \eta\in\Xi\setminus\Upsilon\}\subseteq\ker
e_\xi' \] by the definition of~$S$, and therefore
\[ s_{n,\xi} = \langle Sx_n,e_\xi'\rangle\to \langle 
(S+T)z,e_\xi'\rangle = \langle Sz, e_\xi'\rangle + \langle
Tz,e_\xi'\rangle = s_\xi\quad\text{as}\quad n\to\infty \] by the
continuity of~$e_\xi'$ and the fact that $Tz\in \clspa\{e_\eta :
\eta\in\Xi_0\}\subseteq\ker e_\xi'$. 
Using that the set~$\Upsilon$ is
finite, we obtain
\begin{equation}\label{limityn} 
 y_n = x_n - \sum_{\xi\in\Upsilon} s_{n,\xi}\xi\to 0 -
 \sum_{\xi\in\Upsilon} s_\xi\xi = -z\quad\text{as}\quad
 n\to\infty \end{equation} and
\begin{equation}\label{limitSyn} 
Sy_n = Sx_n - \sum_{\xi\in\Upsilon} s_{n,\xi}e_\xi\to (S+T)z -
\sum_{\xi\in\Upsilon} s_\xi e_\xi = Tz\quad\text{as}\quad
n\to\infty. \end{equation} For each $n\in\N$, write $y_n$ as $y_n =
\sum_{\xi\in\Xi\setminus\Upsilon} t_{n,\xi}\xi$, where only finitely
many of the scalars $t_{n,\xi}$ are non-zero. Then by the
subadditivity of the norm on~$Z$, we have
\[ \|y_n\|\le \sum_{\xi\in\Xi\setminus\Upsilon} |t_{n,\xi}| = 
\biggl\|\sum_{\xi\in\Xi\setminus\Upsilon} t_{n,\xi}e_\xi\biggr\| =
\|Sy_n\|\to \|Tz\|\le\frac12\quad\text{as}\quad n\to\infty, \]
using~\eqref{limitSyn}. This, however, contradicts that
$\|y_n\|\to\|-z\|= 1$ as $n\to\infty$ by~\eqref{limityn}, and
consequently~\eqref{keysepspacelemmaEq1} follows.

To prove the final clause, let
$Q\colon\ell_1(\Xi)\to\ell_1(\Xi)/\mathfrak{S}(S)$ be the quotient
map. Proposition~\ref{sepspacebasics} implies that~$QS$ is bounded, so
it will suffice to show that the range of~$QS$ is dense
in~$\ell_1(\Xi)/\mathfrak{S}(S)$. Let $\epsilon >0$. Each
element $y\in\ell_1(\Xi)/\mathfrak{S}(S)$ has the form $y = Qf$ for some
$f\in\ell_1(\Xi)$. Take a finite subset~$\Upsilon$ of~$\Xi$ such that
$\| f - \sum_{\xi\in\Upsilon}f(\xi)e_\xi\|\le\epsilon$, and set $z =
\sum_{\xi\in\Upsilon}f(\xi)\xi\in Z$. Then we have
\[ \|y - QSz\|\le \|f- Sz\| = \biggl\| f - 
\sum_{\xi\in\Upsilon}f(\xi)e_\xi\biggr\|\le\epsilon,\] from which the
conclusion follows.
\end{proof}

\begin{remark}
Let $Z$ be an infinite-dimensional Banach space which contains no
subspace isomorphic to~$\ell_1$. Combining Rosenthal's
$\ell_1$-theorem with the Schur property of~$\ell_1$, we deduce that
every bounded, linear map from~$Z$ into~$\ell_1$ is compact, and hence
the hypothesis of Lemma~\ref{keysepspacelemma} is satisfied.

We note in passing that the converse of this statement is not
true. Indeed, the Banach space~$\ell_\infty$ contains a subspace which
is (isometrically) isomorphic to~$\ell_1$, and every bounded, linear
map \mbox{$T\colon\ell_\infty\to\ell_1$} is compact. To verify the
latter fact, we observe that each such map~$T$ is weakly compact by a
theorem of Pe\l{}czy\'{n}ski~\cite{pelczynski} because no subspace of
its codomain is isomorphic to~$c_0$.  Using once more that~$\ell_1$
has the Schur property, we conclude that~$T$ is compact.
\end{remark}

\begin{corollary}\label{takingstockpart2}
Let~$\mathscr{A}$ be an infinite-dimensional, separable Banach algebra
with the trivial product, and suppose that every bounded, linear map
from~$\mathscr{A}$ into~$\ell_1$ is compact.  Then~$\mathscr{A}$ has a
singular, admissible extension which splits algebraically, but not
strongly.
\end{corollary}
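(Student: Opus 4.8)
The plan is to apply Corollary~\ref{takingstock} to~$\mathscr{A}$ with a judicious choice of the data $(X,Y,S)$, and then to invoke Lemma~\ref{keysepspacelemma} to exclude strong splitting. Specifically, I would fix a normalized Hamel basis~$\Xi$ for~$\mathscr{A}$, set $X = \ell_1(\Xi)$, let $S\colon\mathscr{A}\to X$ be the linear map determined by $S\xi = e_\xi$ for each $\xi\in\Xi$, and take $Y = \mathfrak{S}(S)$. Then~$Y$ is a closed subspace of~$X$ by Proposition~\ref{sepspacebasics}\romanref{sepspacebasics1}, and the inclusion $\mathfrak{S}(S)\subseteq Y$ holds trivially, so every hypothesis of Corollary~\ref{takingstock} is in place except for the existence of a bounded, linear injection from~$X/Y$ into~$\ell_\infty(\N,Y)$, which I would verify next.

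The key preliminary point is that~$S$ is \emph{un}bounded, so that $Y = \mathfrak{S}(S)$ is non-zero. Indeed, an infinite-dimensional Banach space has no countable Hamel basis, so~$\Xi$ is uncountable and the density character of $\ell_1(\Xi)$ is therefore uncountable; were~$S$ bounded, Proposition~\ref{sepspacebasics}\romanref{sepspacebasics2} would give $\mathfrak{S}(S) = \{0\}$, and the final clause of Lemma~\ref{keysepspacelemma} (applied with $Z = \mathscr{A}$) would then bound the density character of $\ell_1(\Xi) = \ell_1(\Xi)/\mathfrak{S}(S)$ by that of the separable algebra~$\mathscr{A}$, which is absurd. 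Granting $Y\ne\{0\}$, I would pick a unit vector $y_0\in Y$; then $(t_n)_{n\in\N}\mapsto(t_ny_0)_{n\in\N}$ is an isometric embedding of $\ell_\infty(\N,\K)$ into $\ell_\infty(\N,Y)$. On the other hand, the final clause of Lemma~\ref{keysepspacelemma} shows that $X/Y = \ell_1(\Xi)/\mathfrak{S}(S)$ has density character at most that of~$\mathscr{A}$, hence is separable, and so embeds isometrically into $\ell_\infty(\N,\K)$ by the standard embedding of a separable Banach space into $\ell_\infty$. Composing these two maps produces the required bounded, linear injection $X/Y\to\ell_\infty(\N,Y)$.

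Corollary~\ref{takingstock} now applies and yields that~$\mathscr{A}$ has a singular, admissible extension which splits algebraically, and moreover that this extension splits strongly if and only if $(S+T)[\mathscr{A}]\subseteq Y = \mathfrak{S}(S)$ for some bounded, linear map $T\colon\mathscr{A}\to\ell_1(\Xi)$. But~$\mathscr{A}$ is infinite-dimensional and, by hypothesis, every bounded, linear map from~$\mathscr{A}$ into~$\ell_1$ is compact; hence Lemma~\ref{keysepspacelemma} applies with $Z = \mathscr{A}$ and tells us, via~\eqref{keysepspacelemmaEq1}, that $(S+T)[\mathscr{A}]\not\subseteq\mathfrak{S}(S)$ for \emph{every} bounded, linear map $T\colon\mathscr{A}\to\ell_1(\Xi)$. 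Therefore the extension does not split strongly, which completes the argument.

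I do not expect a genuine obstacle here: essentially all of the work has already been carried out in Lemma~\ref{keysepspacelemma}, and what remains is the bookkeeping needed to feed its two conclusions into Corollary~\ref{takingstock}. The one place calling for a little care is the verification that $Y = \mathfrak{S}(S)\ne\{0\}$, since this is precisely what guarantees that the codomain $\ell_\infty(\N,Y)$ is large enough to receive the (automatically separable) quotient~$X/Y$; as indicated above, this non-vanishing is forced by the density-character estimate in Lemma~\ref{keysepspacelemma} together with the separability of~$\mathscr{A}$.
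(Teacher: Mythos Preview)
Your proposal is correct and follows the same route as the paper: choose $X=\ell_1(\Xi)$, $Y=\mathfrak{S}(S)$ with $S\xi=e_\xi$, use the density-character clause of Lemma~\ref{keysepspacelemma} to see that $X/Y$ is separable and hence embeds into $\ell_\infty(\N,Y)$, and then feed this together with~\eqref{keysepspacelemmaEq1} into Corollary~\ref{takingstock}. You are in fact slightly more careful than the paper, in that you explicitly verify $Y\ne\{0\}$ before invoking the embedding $\ell_\infty\hookrightarrow\ell_\infty(\N,Y)$; the paper leaves this point implicit.
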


\begin{proof} Applying Lemma~\ref{keysepspacelemma} with $Z=
  \mathscr{A}$ and taking $X = \ell_1(\Xi)$, where~$\Xi$ is a
  normalized Hamel basis for~$\mathscr{A}$, we obtain a linear map
  $S\colon\mathscr{A}\to X$ such that the closed subspace $Y =
  \mathfrak{S}(S)$ of~$X$ satisfies $(S+T)[\mathscr{A}]\not\subseteq
  Y$ for each bounded, linear map $T\colon\mathscr{A}\to X$, and the
  quotient space~$X/Y$ is separable. Hence~$X/Y$ embeds
  into~$\ell_\infty$, and thus into~$\ell_\infty(\N,Y)$, and the
  conclusion follows from Corollary~\ref{takingstock}.
\end{proof}

\begin{proof}[Proof of Theorem~{\normalfont{\ref{mainthm}}}]
Our strategy is to apply
Proposition~\ref{pullbacksandsplittingsofextensions} with $\mathscr{B}
= \mathscr{B}(E_{\text{R}})$, $\mathscr{C} = \ell_2^{\sim}$, and
$\beta = \psi$, using the notation of
Theorem~\ref{WEBEsplitexact}. By~\eqref{WEBEsplitexactEq1}, we have an
extension of the form~\eqref{ext2} which splits strongly.

The Schur property of~$\ell_1$ implies that every bounded, linear map
from~$\ell_2$ into~$\ell_1$ is compact, so by
Corollary~\ref{takingstockpart2}, we obtain a singular, admissible
extension of~$\ell_2$
\begin{equation}\label{ext4} \spreaddiagramcolumns{2ex}%
    \xymatrix{\{0\}\ar[r] & \ker\alpha_0\ar[r] &
      \mathscr{E}\ar^-{\displaystyle{\alpha_0}}[r] & \ell_2\ar[r] &
      \{0\}}
\end{equation} 
which splits algebraically, but not strongly. Passing to the
unitizations, and writing~$\mathscr{A}$ for the unitization of the
Banach algebra~$\mathscr{E}$ (so that $\mathscr{A} =
\mathscr{E}\oplus\K1$, with the product and norm defined in the usual
way), we obtain an extension of the form~\eqref{ext1} of $\mathscr{C}
= \ell_2^{\sim}$, where \[ \alpha(a+s1) = \alpha_0(a) + s1\qquad
(a\in\mathscr{E},\, s\in\K), \] and this extension clearly inherits
the properties of~\eqref{ext4}, so that it is singular and admissible
and splits algebraically, but it does not split strongly. Thus
Proposition~\ref{pullbacksandsplittingsofextensions} produces an
extension~\eqref{ext3} of $\mathscr{B} = \mathscr{B}(E_{\text{R}})$
which is singular and admissible and splits algebraically, but it does
not split strongly. To complete the proof, we note that the
algebra~$\mathscr{D}$ in~\eqref{ext3} is unital because the
algebras~$\mathscr{A}$, $\mathscr{B}$, and~$\mathscr{C}$
in~\eqref{ext1} and~\eqref{ext2} are unital, and hence so are the
surjections~$\alpha$ and~$\beta$.
\end{proof}

\begin{remark}
The Banach space~$E_{\text{DLW}}$ constructed by Dales, Loy, and
Willis~\cite{dlw} provides an interesting contrast to Read's
space~$E_{\text{R}}$, especially in relation to
Theorem~\ref{mainthm}. Indeed, $E_{\text{DLW}}$ shares
with~$E_{\text{R}}$ the property that~$\mathscr{B}(E_{\text{DLW}})$
admits a discontinuous algebra homomorphism into a Banach algebra
(under the assumption of the Continuum Hypothesis), but it differs in
that every derivation from~$\mathscr{B}(E_{\text{DLW}})$ into a Banach
$\mathscr{B}(E_{\text{DLW}})$-bi\-module is continuous. Hence every
singular, admissible extension of~$\mathscr{B}(E_{\text{DLW}})$ which
splits algebraically also splits strongly by a general result that was
stated in the Introduction.
\end{remark}

\bibliographystyle{amsplain}

\end{document}